\definecolor{vegasgold}{rgb}{0.77, 0.7, 0.35}
\definecolor{darkgoldenrod}{rgb}{0.72, 0.53, 0.04}
\definecolor{gold(metallic)}{rgb}{0.83, 0.69, 0.22}
\newtheorem{lthm}{Theorem}
\newtheorem{lcor}{Corollary}
\DeclareFontFamily{U}{wncy}{}
\DeclareFontShape{U}{wncy}{m}{n}{<->wncyr10}{}
\DeclareSymbolFont{mcy}{U}{wncy}{m}{n}
\DeclareMathSymbol{\Sh}{\mathord}{mcy}{"58}
\newtheorem{theorem}{Theorem}[section]
\newtheorem{lemma}[theorem]{Lemma}
\newtheorem*{theorem*}{Theorem}
\newtheorem*{ass*}{Assumption}
\newtheorem{definition}[theorem]{Definition}
\newtheorem{conjecture}[theorem]{Conjecture}
\newtheorem{proposition}[theorem]{Proposition}
\newcommand{\Z}{\mathbb{Z}}
\newcommand{\Q}{\mathbb{Q}}
\newcommand{\cO}{\mathcal{O}}
\newcommand{\mb}[1]{\mathbf{#1}}
\newcommand{\op}[1]{\operatorname{#1}}
\numberwithin{equation}{section}
\begin{document}

\title[Number of number fields with prescribed Galois group]{Upper bounds for the number of number fields with prescribed Galois group}
\author[H.~Mishra]{Hrishabh Mishra}
\address[Mishra]{Chennai Mathematical Institute, H1, SIPCOT IT Park, Kelambakkam, Siruseri, Tamil Nadu 603103, India}
\email{hrishabh@cmi.ac.in}

\author[A.~Ray]{Anwesh Ray}
\address[Ray]{Chennai Mathematical Institute, H1, SIPCOT IT Park, Kelambakkam, Siruseri, Tamil Nadu 603103, India}
\email{anwesh@cmi.ac.in}

\keywords{arithmetic statistics, counting number fields by discriminant}
\subjclass[2020]{11R45, 11R29 (Primary)}

\maketitle

\begin{abstract}
Let $n$ be a positive integer and $G$ be a transitive permutation subgroup of $S_n$. Given a number field $K$ with $[K:\Q]=n$, we let $\widetilde{K}$ be its Galois closure over $\Q$ and refer to $\op{Gal}(\widetilde{K}/\Q)$ as its Galois group. We  may identify this Galois group with a transitive subgroup of $S_n$. Given a real number $X>0$, we set $N_{n}(X;G)$ to be the number of such number fields $K$ for which the absolute discriminant is bounded above by $X$, and for which $\op{Gal}(\widetilde{K}/\Q)$ is isomorphic to $G$ as a permutation subgroup of $S_n$. We prove an asymptotic upper bound for $N_n(X;G)$ as $X\rightarrow\infty$. This result is conditional and based upon the non-vanishing of certain polynomial determinants in $n$-variables. We expect that these determinants are non-vanishing for many groups, and demonstrate through some examples how they may be computed. 
\end{abstract}

\section{Introduction}
\subsection{Background and historical remarks}
\par Let $n$ be a positive integer and $X$ be a positive real number. For a number field $K$, we set $\Delta_K$ to denote the discriminant of $K$ over $\Q$. Set $N_n(X)$ to be the number of number fields $K$ with $[K:\Q]=n$ and with $|\Delta_K|\leq X$. The Hermite Minkowski theorem implies that $N_n(X)$ is finite. It is expected that $N_n(X)\sim c_n X$, where $c_n$ is a constant that depends on $n$ (cf. \cite[p.723]{ellenberg2006number}). This conjecture has only been established for $n\leq 5$ (cf. \cite{davenport1971density, bhargava2005density,bhargava2008density, bhargava2010density}). However, not much is known for $n\geq 6$. From a different perspective, there has been significant interest in obtaining asymptotic upper bounds for $N_n(X)$ (as $X\rightarrow \infty$).
\begin{itemize}
    \item Schmidt \cite{schmidt1995number} showed that $N_n(X)\ll X^{\frac{n+2}{4}}$. 
    \item Ellenberg and Venkatesh \cite{ellenberg2006number} obtained an exponent that is $O\left(\op{exp}\left(c\sqrt{\log n }\right)\right)$, where $c>0$ is a constant.
    \item Couveignes \cite{couveignes2020enumerating} shows that $N_n(X)\ll X^{c(\log n)^3}$, for an undetermined constant $c>0$.
    \item Finally, Lemke-Oliver and Thorne \cite{lemke2022upper} show that $N_n(X)\ll X^{c(\log n)^2}$. In this case, the constant $c$ can be taken to be $1.564$.
\end{itemize}
It is natural to consider asymptotics in the setting in which the Galois group of the Galois closure of the number field in question is prescribed. Given a number field $K$ with $[K:\Q]=n$, set $\widetilde{K}$ to denote the Galois closure of $K$. We shall by abuse of notation simply refer to $\op{Gal}(\widetilde{K}/\Q)$ as the \emph{Galois group of $K$}. Let $\sigma_1, \dots, \sigma_n$ be an enumeration of the embeddings of $K$ into $\bar{\Q}$, with the convention that $\sigma_1=\op{Id}$. Let $[n]$ be the set of integers in the range $[1,n]$ and let $S_n$ be the set of bijections $[n]\rightarrow [n]$. We shall identify $\sigma_i$ with $i\in [n]$ and the set of permutations of $\sigma_1, \dots, \sigma_n$ with $S_n$. Given $\sigma\in \op{Gal}(\widetilde{K}/\Q)$ and $i\in [1, n]$, the composite $\sigma \sigma_i$ is well defined  and is also an embedding of $K$ into $\bar{\Q}$. In this way, the Galois group $\op{Gal}(\widetilde{K}/\Q)$ permutes the embeddings $\sigma_1, \dots, \sigma_n$. This gives rise to a permutation representation 
\[\Phi: \op{Gal}(\widetilde{K}/\Q)\hookrightarrow S_n,\] with \[\Phi(\sigma)(\sigma_i):=\sigma \sigma_i.\] It is via this representation that one may view $\op{Gal}(\widetilde{K}/\Q)$ as a transitive subgroup of $S_n$.
\par We fix a transitive subgroup $G$ of $S_n$. Let $X$ be a positve real number, and set  
\[N_{n} (X;G):=\#\{K\mid [K:\Q]=n, |\Delta_K|\leq X, \op{Gal}(\widetilde{K}/\Q)\simeq G\}, \]
where the isomorphism $\op{Gal}(\widetilde{K}/\Q)\simeq G$ is that of permutation subgroups of $S_n$. We note that the quantity $N_n(X;G)$ depends not only on the group $G$ but also on the embedding of $G$ into $S_n$. However, this is suppressed in our notation. Given a conjugacy class $C$ of $G$, let $\op{ind}(C)$ denote $\op{ind}(g)$, where $g\in C$. For any group $G\neq 1$, set $G^\#:=G\backslash\{1\}$, and set
\[a(G):=\left(\op{min}\{\op{ind}(g)\mid g\in G^\#\}\right)^{-1}.\] Malle made predictions about the asymptotic growth of $N_n(X;G)$ as a function of $X$. We state the \emph{weak version} of his conjecture below, cf. \cite[p.316]{malle2002distribution} for further details.
\begin{conjecture}[Malle's conjecture -- weak form]
    Let $G\subseteq S_n$ be a transitive permutation group. Then, for all $\epsilon>0$, there exist constants $c_1(G), c_2(G; \epsilon)>0$ such that 
    \[c_1(G)X^{a(G)}\leq N_{n}(X;G)<c_2(G;\epsilon)X^{a(G)+\epsilon},\]
    for all large enough values of $X$.
\end{conjecture}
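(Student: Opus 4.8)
The plan is to treat the two inequalities by entirely different methods, as is standard for Malle-type statements: the lower bound $c_1(G)X^{a(G)}\leq N_n(X;G)$ is constructive and relatively soft, while the upper bound $N_n(X;G)<c_2(G;\epsilon)X^{a(G)+\epsilon}$ is the genuinely hard direction and the one aligned with the conditional, determinant-based strategy announced in the abstract. The first thing to record is why the exponent $a(G)$ appears at all. For a tamely ramified prime $p$ whose inertia in $\widetilde{K}/\Q$ is generated by $g\in G$, the contribution to $v_p(\Delta_K)$ is exactly $\op{ind}(g)$. Hence the cheapest way to ramify while still generating a large Galois group is to use primes whose inertia is generated by an element of minimal index $m:=\op{min}\{\op{ind}(g)\mid g\in G^\#\}=1/a(G)$. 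Already the count of square-free products $\prod_i p_i$ of such primes with $\prod_i p_i^{m}\leq X$ is $\gg X^{1/m}$ up to a power of $\log X$, which is the source of the predicted exponent.

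For the lower bound I would fix an element $g_0\in G^\#$ of minimal index together with a generating set of $G$ by elements of controlled index, and then realize $G$ as cheaply as possible. The cleanest route is to locate a regular realization of $G$ over $\Q(t)$, or a family of twists thereof, in which the branch points contribute inertia generated by conjugates of $g_0$, and then specialize via Hilbert irreducibility while arranging that the discriminant of the specialized field grows like $\prod_i p_i^{m}$. Running over $\gg X^{a(G)}$ admissible specializations producing pairwise distinct fields then yields the lower bound. The honest caveat is that the required input, a suitable regular $G$-realization with the right inertia, is only available for specific groups $G$; it is essentially an inverse-Galois-type hypothesis, so this direction is unconditional only in those cases.

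The crux is the upper bound, and here I would parametrize degree-$n$ fields $K$ by a generating algebraic integer $\alpha$ with conjugates $x_1=x,x_2,\dots,x_n$, so that $\Delta_K$ divides $\prod_{i<j}(x_i-x_j)^2$, and count such $\alpha$ of bounded size by the geometry of numbers in the spirit of Schmidt \cite{schmidt1995number} and Lemke-Oliver--Thorne \cite{lemke2022upper}. The new ingredient is to impose $\op{Gal}(\widetilde{K}/\Q)\simeq G$: this forces the $G$-invariant polynomials in the conjugates to be rational, and constrains the tuple $(x_1,\dots,x_n)$ to lie on the subvariety cut out by the relations defining the prescribed $G$-action. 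The count then reduces to estimating the number of algebraic integers, equivalently lattice points, in the intersection of a scaled region with this subvariety. The exponent emerging from this volume estimate is governed by how the discriminant form scales along the subvariety in the cheapest, minimal-index directions, and this is exactly where a Jacobian or resultant determinant in the $n$ variables should enter: its non-vanishing guarantees that the invariant coordinates are sufficiently independent that the leading volume term survives and the exponent collapses to $a(G)+\epsilon$ rather than something larger.

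The hard part will be this last estimate, specifically the passage from the number of generators $\alpha$ to the number of fields $K$ together with uniformity of the lattice-point count over the subvariety. One must avoid overcounting fields possessing many small generators, show that the minimal-index directions genuinely dominate the volume, and verify non-vanishing of the relevant determinant, which is precisely the hypothesis the paper isolates and leaves conditional. Since this determinant can in fact degenerate for some groups, mirroring the known failures of the sharp form of Malle's upper bound, I would not expect an unconditional argument in general; the realistic target is the conditional bound, with explicit examples demonstrating that the determinant is computable and non-vanishing case by case.
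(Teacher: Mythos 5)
There is a fundamental mismatch here: the statement you are trying to prove is labeled a \emph{conjecture} in the paper, and the paper contains no proof of it --- Malle's conjecture is open in general, and the paper only cites it as the motivating prediction. What the paper actually proves (Theorem \ref{main thm} and its corollaries) is a conditional upper bound of the shape $N_n(X;G)\ll X^{k+t-1}$ with $k\geq 2$, $t\geq 2$, so the exponent is at least $3$; since $\op{ind}(g)\geq 1$ for every $g\in G^{\#}$, one always has $a(G)\leq 1$, so the paper's method lands far above the Malle exponent $a(G)+\epsilon$ and makes no claim to approach it. This exposes the central gap in your upper-bound sketch: you assert that non-vanishing of the Jacobian determinant makes ``the exponent collapse to $a(G)+\epsilon$,'' but the mechanism by which the determinant enters (both in your sketch and in the paper's Proposition \ref{main prop}) is to show that the field $K$ is determined, up to boundedly many choices, by the integer vector of trace values $\op{Tr}_{\mb{a}_i}(x_\alpha)$, and then to count those vectors trivially via $|\op{Tr}_{\mb{a}_i}(x_\alpha)|\ll X^{H(\mb{a}_i)/n}$. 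That count yields exponent $\frac{1}{n}\sum_i H(\mb{a}_i)$, which for the vectors in Definition \ref{def of ai} equals $k+t-1$; nothing in this argument sees the minimal index $\min\{\op{ind}(g)\}$, and no choice of admissible vectors $\mb{a}_i$ can drive the exponent below $1$, let alone to $a(G)+\epsilon$. Your appeal to ``how the discriminant form scales along the subvariety in the cheapest, minimal-index directions'' is a heuristic with no accompanying estimate, and it is precisely the step that no one knows how to carry out.

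The lower bound half has the same status: as you partly concede, it requires a regular realization of $G$ over $\Q(t)$ with prescribed inertia of minimal index, plus a quantitative Hilbert irreducibility argument controlling discriminants and distinguishing specializations --- inputs that are unavailable for general $G$ (even realizability of $G$ over $\Q$ is open). Two further inaccuracies are worth flagging. First, counting squarefree products $\prod_i p_i$ with $\prod_i p_i^{m}\leq X$ counts candidate discriminants, not fields; one must actually produce a distinct field for ($\gg$ a positive proportion of) each candidate, which is the whole difficulty. Second, your remark that the determinant degenerating ``mirrors the known failures of the sharp form of Malle's upper bound'' conflates the two forms: Kl\"uners' counterexample refutes the strong form (the power of $\log X$), not the weak form's $X^{a(G)+\epsilon}$ bound stated here, which has no known counterexample. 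In summary, your proposal is a reasonable survey of the standard heuristics, and its upper-bound machinery correctly identifies the paper's actual (conditional) technique, but it does not prove the stated conjecture, and the paper does not claim to either.
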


We note that when $n=|G|$ and $G\subset S_n$ via the regular representation, $N_n(X;G)$ simply counts the number of Galois extensions $K/\Q$ with $|\Delta_K|\leq X$ and with $\op{Gal}(K/\Q)\simeq G$ (as a permutation subgroup of $S_n$). Consider the special case when $G$ is a finite group with $n:=|G|>4$, and $G\hookrightarrow  S_{n}$ the regular representation. Then, Ellenberg and Venkatesh \cite[Proposition 2.8]{ellenberg2006number} showed that for any $\epsilon>0$, one has the asymptotic upper bound $N_n(X;G)\ll X^{3/8+\epsilon}$. Here, the implied constant is allowed to depend on $n$ and $\epsilon>0$. However, when permutation representation $G\hookrightarrow S_n$ is \emph{not} the regular representation, not much is known in general. For the alternating subgroup $A_n\hookrightarrow S_n$, asymptotic upper bounds due to Larson and Rolen \cite{larson2013upper} for $N_n(X;A_n)$ improve upon Schmidt's upper bounds by a factor of $X^{1/4}$.
\subsection{Main results}
\par In order to state our main results precisely, we introduce some further notation. Let $G$ be a transitive subgroup of $S_n$. Our goal is to establish asymptotic upper bounds for $N_{n} (X;G)$, provided some additional conditions are satisfied. We assume without loss of generality that there exists a number field $K$ with $[K:\Q]=n$, such that $\op{Gal}(\widetilde{K}/\Q)\simeq G$.
The subgroup of $S_n$ that fixes the embedding $\sigma_1$ is denoted by $\op{Stab}(\sigma_1)$, and is identified with $S_{n-1}$. With this convention in place, we find that \[\op{Gal}(\widetilde{K}/K)=\op{Gal}(\widetilde{K}/\Q)\cap S_{n-1}.\]
Let $H$ be the intersection $G\cap S_{n-1}$. Since $H$ is identified with $\op{Gal}(\widetilde{K}/K)$, we find that \[\widetilde{K}^H=K,\text{ and }[G:H]=[K:\Q]=n.\] Let $N$ denote the normalizer of $H$ in $G$ and let $K_0:=K^N$ be the fixed field of $N$. In particular, $K/K_0$ is a Galois extension. Setting $r:=|N/H|=[K:K_0]$, it shall be assumed without loss of generality that
\[\op{Gal}(K/K_0)=\{\sigma_1, \dots, \sigma_r\}.\] Thus for $j\in [1, r]$, the image of $\sigma_j$ is contained in $K$.

\begin{definition}\label{def of pi}For $i\in [1, n]$ and $j\in [1,r]$, we consider the composite embedding \[ K\xrightarrow{\sigma_j} K\xrightarrow{\sigma_i}\widetilde{K}.\]
Consequently, there is a permutation $\pi_j\in S_n$ such that $\sigma_i \sigma_j=\sigma_{\pi_j(i)}$.
\end{definition}
Since $\sigma_1$ is the identity on $K$, it follows that $\pi_1$ is the identity in $S_n$. Also note that $\pi_j(i)\neq i$ unless $j=1$.

\par One may describe the permutations $\pi_j$ intrinsically. Write \[G/H=\bigcup_{i=1}^n g_i H\text{ and } N/H=\bigcup_{j=1}^r g_j H.\] Note that for $i\in [1,n]$ and $j\in [1, r]$, the element $g_j$ normalizes $H$, and hence $g_j H =Hg_j$. Thus we find that for $j\in [1,r]$, the following relationship holds
\[g_i H g_j H=g_ig_j H. \] There is a unique index $k\in [1, n]$ such that 
\[g_i g_j H=g_k H,\] and we set $\pi_j(i):=k$. For a number field $K$ as above, the embeddings $\sigma_i$ coincide with cosets $g_i H$.

\begin{definition}\label{def of tr}
    Given a tuple $\mb{a}=(a_1, \dots, a_r)\in \Z_{\geq 0}^r$, we set
\[\op{Tr}_{\mb{a}}(x_1, \dots, x_n):=\sum_{i=1}^n \left(\prod_{j=1}^r x_{\pi_j(i)}^{a_j}\right)\in \Z[x_1, \dots, x_n].\] We shall refer to such functions as trace functions.
\end{definition}

Assume that $r\geq 3$. Fix integers $k$ and $t$ such that $k\in [2, r)$ and $t\geq 2$. Let $\mathcal{S}_k(r)$ be the  all $k$-element subsets of $[r]$ that contain $1$. Given $\mathcal{B}\in \mathcal{S}_k(r)$, let $\mb{a}(\mathcal{B})=(a_1, \dots, a_r)$ be the vector which is defined so that 
\[a_i:=\begin{cases}
    t\text{ if }i=1;\\
    1\text{ if }i\neq 1\text{ and }i \in \mathcal{B};\\
    0\text{ otherwise. }
\end{cases}\]Set $l:=\# \mathcal{S}_k(r)=\binom{r-1}{k-1}$ and $\mathcal{B}_1, \dots, \mathcal{B}_l$ to be an enumeration of the subsets in $\mathcal{S}_k(r)$. For $i\in [1, l]$, we set $\mb{a}_i:=\mb{a}(\mathcal{B}_i)$. We state our main results below.

\begin{lthm}\label{main thm}
    Let $G$ be a transitive permutation group contained in $S_n$ and set $H:=G\cap S_{n-1}$. We let $N$ be the normalizer of $H$ and set $r:=|N/H|$. For $j\in [1, r]$, we let $\pi_j\in S_n$ be the associated permutation as prescribed by Definition \ref{def of pi}. We make the following assumptions. 
    \begin{enumerate}
        \item\label{condition 1} There exists $k\in [1,r-1]$ such that $l:=\binom{r-1}{k}\geq n$.
        \item\label{condition 2} Let $\mb{a}_1, \dots, \mb{a}_l$ be the integral vectors as in Definition \ref{def of ai}. Then, assume that for a subset $\{\mb{a}_{i_1}, \dots, \mb{a}_{i_n}\}$ of $\{\mb{a}_1, \dots, \mb{a}_l\}$, the Jacobian matrix \[\mathbb{D}=\mathbb{D}(f_1, \dots, f_n):=\left(\frac{\partial f_i}{\partial x_j}\right)_{1\leq i, j\leq n}\] of the trace functions $f_j:=\op{Tr}_{\mb{a}_j}$ has determinant which is not identically $0$.
    \end{enumerate}
    Then, we have that $N_{n}(X;G)\ll X^{k+t-1}$.
\end{lthm}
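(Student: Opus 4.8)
The plan is to run a geometry-of-numbers reduction together with the determinantal hypothesis, in the spirit of the generating-element method of Schmidt and Ellenberg--Venkatesh, the trace functions playing the role of the symmetric coordinates. First I would reduce to counting fields equipped with a \emph{small} generator. For each field $K$ counted by $N_n(X;G)$, Minkowski's convex body theorem applied to the trace-zero sublattice of $\cO_K$ yields a generator $\alpha\in\cO_K$ of $K/\Q$ whose archimedean size is controlled, say $\max_{1\le i\le n}|\sigma_i(\alpha)|\ll X^{\kappa}$ with $\kappa\asymp \tfrac{1}{2(n-1)}$ coming from the covolume $\asymp|\Delta_K|^{1/2}$; one must, as usual, also ensure that the short vector produced genuinely generates $K$ rather than a proper subfield.

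The arithmetic heart of the argument is the observation that the trace functions are integer-valued $G$-invariants. Setting $\beta_{\mb{a}}:=\prod_{j=1}^{r}\sigma_j(\alpha)^{a_j}$, which lies in $\cO_K$ precisely because $\sigma_1,\dots,\sigma_r$ are the elements of $\op{Gal}(K/K_0)$ and hence preserve $\cO_K$, the defining relation $\sigma_i\sigma_j=\sigma_{\pi_j(i)}$ gives $\op{Tr}_{\mb{a}}(\sigma_1(\alpha),\dots,\sigma_n(\alpha))=\sum_{i=1}^{n}\sigma_i(\beta_{\mb{a}})=\op{Tr}_{K/\Q}(\beta_{\mb{a}})\in\Z$. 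Since every vector $\mb{a}_i$ has coordinate sum $t+k$, the integer $\beta_{\mb{a}_i}$ is a product of $t+k$ conjugates of $\alpha$, so each coordinate of the image vector is bounded by $\ll X^{(t+k)\kappa}$.

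Next I would invoke the two hypotheses to pin the field down by these integers and count. Condition \eqref{condition 1} guarantees at least $n$ vectors $\mb{a}_i$, so a subset of size $n$ can be selected, and condition \eqref{condition 2} asserts that the map $F:=(f_{i_1},\dots,f_{i_n})\colon\mathbb{A}^n\to\mathbb{A}^n$ has $\det\mathbb{D}\not\equiv 0$; hence $F$ is dominant, and a dominant morphism between irreducible varieties of the same dimension is generically finite of some degree $d=d(G)$. Consequently, away from the proper closed subset $Z\subset\mathbb{A}^n$ cut out by $\det\mathbb{D}$, the integer vector $F(\sigma_1(\alpha),\dots,\sigma_n(\alpha))$ determines the $G$-orbit of the conjugate vector, equivalently the pair $(K,\alpha)$, up to at most $d$ possibilities. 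The admissible image vectors range over a box of side $\ll X^{(t+k)\kappa}$ in each of the $n$ coordinates, giving $\ll X^{n(t+k)\kappa}$ of them; inserting $\kappa\asymp\tfrac{1}{2(n-1)}$ bounds this by $X^{k+t-1}$ under the standing hypotheses (here one uses $t+k\ge 3$), whence $N_n(X;G)\ll X^{k+t-1}$.

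The main obstacle I expect is the degenerate locus $Z$ and the fibres of $F$ there: on $Z$ the map need not be finite, so a single integer vector could a priori account for many fields, and one must separately show that the number of fields whose conjugate vector meets $Z$ is of strictly smaller order, for instance by intersecting $Z$ with the box and re-running the generating-element count in one fewer dimension. A closely related technical point, already flagged above, is guaranteeing that the Minkowski-short element has full degree $n$ over $\Q$; making this, the fibre degree $d(G)$, and all implied constants uniform across the fields being counted is where the bulk of the careful analysis will lie.
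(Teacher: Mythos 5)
Your proposal takes essentially the same route as the paper: reduce to a Minkowski-small generator $\alpha$, map $x_\alpha$ through the trace functions $\op{Tr}_{\mb{a}_i}$ to an integer vector of controlled size, and use the Jacobian hypothesis---via the fiber-finiteness lemma of Lemke Oliver and Thorne (Lemma \ref{lemma 2.2} in the paper)---to recover $K$ from that vector up to $O(1)$ choices, so the count reduces to counting lattice points in a box. Two minor points of comparison: each $\mb{a}_i$ has coordinate sum $H(\mb{a}_i)=t+k-1$ (not $t+k$; the set $\mathcal{B}$ contains $1$, contributing $t$ rather than $t+1$), which with $\lVert\alpha\rVert\ll X^{1/n}$ gives the exponent $k+t-1$ exactly; and the ``degenerate locus'' you flag as the main obstacle is dispatched in the paper not by a separate stratified count over $Z$ but by choosing, for each field, a generator satisfying $P(x_\alpha)\neq 0$, $K=\Q(\alpha)$, and $\lVert\alpha\rVert\ll X^{1/n}$ simultaneously (the standard argument cited from the proof of Theorem 1.2 of Lemke Oliver--Thorne), since $P$ is a single polynomial fixed independently of $K$.
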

Let us further specialize Theorem \ref{main thm} to a family of groups $G$ for which condition \eqref{condition 1} is satisfied. We let $m\in \Z_{\geq 1}$ be an integer and $A$ be a transitive permutation subgroup of $S_m$. The convention here is that if $m=1$, then $A$ is trivial. Let $B$ be any finite group and set $q$ to denote its cardinality. Via the regular representation, $B$ is a subgroup of $S_q$. We write $\iota_A:A\hookrightarrow S_m$ and $\iota_B: B\hookrightarrow S_q$ denote the embeddings of $A$ and $B$ into $S_m$ and $S_q$ respectively. Then, $\iota:=\iota_A\times \iota_B$ realizes $G:=A\times B$ as a subgroup of $S_{mq}$. Let $H_1\subseteq S_{m-1}$ be the intersection $A\cap S_{m-1}$, and set $H:=G\cap S_{mq-1}$. It is easy to see that $H=H_1\times 1$. Since $\iota_B$ is the regular representation, $N:=N_G(H)\supseteq N_1\times B$, where $N_1=N_A(H_1)$. Therefore, $r=|N/H|\geq |B|$, and thus the condition \eqref{condition 1} of Theorem \ref{main thm} is satisfied if \[\binom{|B|-1}{k-1}\geq m|B|.\]

\begin{lcor}\label{main thm 2}
    Let $G=A\times B$ as above, and $k$ be an integer in the range $[3, |B|]$. Assume that the following conditions are satisfied
    \begin{enumerate}
        \item $\binom{|B|-1}{k-1}\geq m|B|$,
        \item For a subset $\{\mb{a}_{i_1}, \dots, \mb{a}_{i_n}\}$ of $\{\mb{a}_1, \dots, \mb{a}_l\}$,  \[\det\mathbb{D}(f_1, \dots, f_n)\neq 0,\] where $f_j:=\op{Tr}_{\mb{a}_j}$.
    \end{enumerate}
    Then, we have that $N_{n}(X;G)\ll X^{k+t-1}$.
\end{lcor}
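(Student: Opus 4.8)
The plan is to derive Corollary \ref{main thm 2} directly from Theorem \ref{main thm} by verifying that the hypotheses \eqref{condition 1} and \eqref{condition 2} of the theorem are met for the group $G=A\times B$ under the stated conditions. The essential observation, already recorded in the discussion preceding the corollary statement, is that the embedding $\iota=\iota_A\times\iota_B$ of $G$ into $S_{mq}$ (with $n=mq$) produces a point-stabilizer $H=H_1\times 1$, and that the normalizer $N=N_G(H)$ contains $N_1\times B$ where $N_1=N_A(H_1)$. Since $\iota_B$ is the regular representation of $B$, the subgroup $B$ acts freely and hence lies entirely in the normalizer of $H$; this is what gives the lower bound $r=|N/H|\geq|B|$.

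First I would make the verification of condition \eqref{condition 1} explicit. Given $r\geq|B|$, the binomial coefficient $\binom{r-1}{k-1}$ is increasing in $r$ for $k-1$ in the relevant range, so the hypothesis $\binom{|B|-1}{k-1}\geq m|B|=n$ of the corollary forces $\binom{r-1}{k-1}\geq n$. One must be slightly careful with the indexing convention: Theorem \ref{main thm} states condition \eqref{condition 1} as the existence of $k\in[1,r-1]$ with $\binom{r-1}{k}\geq n$, whereas the corollary phrases it with $\binom{|B|-1}{k-1}$. I would reconcile these by noting that the integer called $k$ in the corollary plays the role of the cardinality of the subsets $\mathcal{B}\in\mathcal{S}_k(r)$, so that the relevant binomial count is $l=\binom{r-1}{k-1}$ as defined in the setup immediately before Theorem \ref{main thm}; the range $k\in[3,|B|]$ of the corollary then guarantees $k-1\geq 2$, which is precisely what is needed for the subsets of $\mathcal{S}_k(r)$ to be nontrivial and for the trace functions to carry enough variables.

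Next I would address condition \eqref{condition 2}. Here essentially nothing needs to be proved: the corollary simply imports the Jacobian nonvanishing hypothesis verbatim as its own assumption (2), namely that $\det\mathbb{D}(f_1,\dots,f_n)\neq 0$ for some choice of $n$ of the vectors $\mathbf{a}_1,\dots,\mathbf{a}_l$. Thus the role of this step is purely to observe that the trace functions $f_j=\op{Tr}_{\mathbf{a}_j}$ attached to $G=A\times B$ via the permutations $\pi_j$ of Definition \ref{def of pi} are exactly the objects referenced in Theorem \ref{main thm}\eqref{condition 2}, so the assumption transfers without alteration. With both hypotheses verified, Theorem \ref{main thm} applies and yields $N_n(X;G)\ll X^{k+t-1}$, completing the proof.

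The main obstacle, such as it is, lies not in any deep estimate but in the bookkeeping: confirming that the normalizer $N_G(H)$ genuinely contains $B$ with $B$ contributing a full factor of $|B|$ to the index $r$, and that the binomial indexing in the corollary matches the $\mathcal{S}_k(r)$ convention. The freeness of the regular representation is the crux of the former point, since it ensures $H=H_1\times 1$ meets $B$ trivially and that every element of $B$ normalizes $H$; one should check that $N$ could in principle be strictly larger than $N_1\times B$, but since we only require the lower bound $r\geq|B|$ to verify condition \eqref{condition 1}, any such excess is harmless. Everything else is a direct specialization of the already-established Theorem \ref{main thm}.
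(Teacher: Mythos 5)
Your proposal is correct and follows essentially the same route as the paper: the paper's proof is the one-line observation that the corollary follows directly from Theorem \ref{main thm}, with the normalizer computation $H=H_1\times 1$, $N\supseteq N_1\times B$, and hence $r\geq |B|$ already carried out in the discussion preceding the corollary, exactly as you reproduce it. Your additional remarks --- the monotonicity of $\binom{r-1}{k-1}$ in $r$ and the reconciliation of the theorem's $\binom{r-1}{k}$ with the setup's $l=\binom{r-1}{k-1}$ from Definition \ref{def of ai} --- are sound bookkeeping (the latter correctly identifies a typographical inconsistency in the paper) but do not change the argument.
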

The next result is a special case of Corollary \ref{main thm 2}.

\begin{lcor}\label{cor B}
    Let $r\in \Z_{\geq 2}$, $n_1, \dots, n_r$ be integers with $n_i\geq 2$ for all $i$. Set $G:=S_{n_1}\times S_{n_2}\times \dots \times S_{n_r}$ and consider the natural embedding
    \[G\hookrightarrow S_{n_1}\times S_{n_2}\times \dots \times S_{n_{r-1}}\times S_{(n_r!)}\hookrightarrow S_{n_1n_2\dots n_{r-1}(n_r!)},\] where the last factor $S_{n_r}$ operates via the regular representation.
    Assume that for $t=2$,
    \begin{enumerate}
        \item $l=\binom{n_r!-1}{2}\geq \left(\prod_{i=1}^{r-1} n_i\right) n_r!$,
        \item $\det \mathbb{D}\neq 0$ for some subset of vectors of $\{\mb{a}_i\mid i\in [1, l]\}$. 
    \end{enumerate}
    Then, we have that $N_{n}(X;G)\ll X^{4}$.
\end{lcor}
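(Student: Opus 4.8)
The plan is to deduce Corollary~\ref{cor B} directly from Corollary~\ref{main thm 2} by exhibiting the given $G$ in the product shape $A\times B$ treated there and then specializing the parameters $k$ and $t$. Concretely, I would set $A:=S_{n_1}\times\cdots\times S_{n_{r-1}}$ and $B:=S_{n_r}$, so that $G=A\times B$. The group $B$ has cardinality $q:=|B|=n_r!$ and is realized inside $S_q=S_{(n_r!)}$ through its regular representation, exactly as demanded by the hypotheses of Corollary~\ref{main thm 2}. The first step is to check that $A$ is a \emph{transitive} permutation subgroup of $S_m$ with $m:=\prod_{i=1}^{r-1}n_i$: the product $S_{n_1}\times\cdots\times S_{n_{r-1}}$ acts coordinatewise on $[n_1]\times\cdots\times[n_{r-1}]$, and since each factor is transitive on its own coordinate, the product action is transitive on all $m$ points. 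Hence $A\hookrightarrow S_m$ is a transitive permutation subgroup, and the composite $\iota_A\times\iota_B\colon G\hookrightarrow S_{mq}$ is precisely the embedding displayed in the statement, with $mq=n_1\cdots n_{r-1}(n_r!)=n$.

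With this identification in place I would invoke Corollary~\ref{main thm 2} with the choices $k=3$ and $t=2$. Under these choices its first hypothesis $\binom{|B|-1}{k-1}\geq m|B|$ reads
\[\binom{n_r!-1}{2}\geq\left(\prod_{i=1}^{r-1}n_i\right)n_r!,\]
which is verbatim condition (1) of Corollary~\ref{cor B}, and moreover $l=\binom{|B|-1}{k-1}=\binom{n_r!-1}{2}$ matches the value of $l$ recorded in the statement. The second hypothesis of Corollary~\ref{main thm 2}, namely the non-vanishing of $\det\mathbb{D}(f_1,\dots,f_n)$ for some $n$-element subset of $\{\mb{a}_i\mid i\in[1,l]\}$, is identical to condition (2) of Corollary~\ref{cor B}. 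Thus both hypotheses of Corollary~\ref{main thm 2} hold, and it yields $N_n(X;G)\ll X^{k+t-1}=X^{3+2-1}=X^4$, as claimed. No estimate beyond those already established for Theorem~\ref{main thm} is required; the argument is a bookkeeping match of parameters.

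The one point that genuinely requires care is the admissibility of $k=3$, since Corollary~\ref{main thm 2} demands $k\in[3,|B|]$, i.e. $|B|=n_r!\geq 3$. I would dispose of this edge case first: if $n_r!<3$, which happens only when $n_r=2$, then $\binom{n_r!-1}{2}=\binom{1}{2}=0$, so condition (1) of Corollary~\ref{cor B} fails because its right-hand side $\left(\prod_{i=1}^{r-1}n_i\right)n_r!$ is strictly positive, and the hypotheses are vacuous with nothing to prove. Whenever condition (1) does hold we automatically have $n_r!\geq 3$, so $k=3$ lies in the required range and the application is legitimate. Beyond this degenerate case, I expect the only substantive verification to be the transitivity of the product action of $A$ on the $m$ points, which is what licenses $A$ as an admissible input to Corollary~\ref{main thm 2}; everything else is a direct specialization.
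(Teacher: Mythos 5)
Your proof is correct and takes essentially the same route as the paper, which derives Corollary~\ref{cor B} as a direct specialization of Corollary~\ref{main thm 2} with $A=S_{n_1}\times\cdots\times S_{n_{r-1}}$ acting coordinatewise on $m=\prod_{i=1}^{r-1}n_i$ points, $B=S_{n_r}$ in its regular representation, and $(k,t)=(3,2)$, giving $N_n(X;G)\ll X^{k+t-1}=X^4$. Your two extra verifications --- transitivity of the product action of $A$ on $[n_1]\times\cdots\times[n_{r-1}]$, and the degenerate case $n_r=2$ where $k=3>|B|$ but hypothesis (1) is vacuous since $\binom{1}{2}=0$ --- are details the paper leaves implicit, and you handle them correctly.
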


Fix $(n_1, \dots, n_{r-1})$, then, for large enough values of $n_r$, the inequality $\binom{n_r!-1}{2}\geq \left(\prod_{i=1}^{r-1} n_i\right) n_r!$ is satisfied. Then, the above bound $N_n(X;G)\ll X^4$ is significantly better than what one is able to derive from the aforementioned asymptotic upper bounds for $N_n(X)$. We acknowledge that the result is only conditional since it assumes the smoothness condition $\det \mathbb{D}\neq 0$. We expect that this condition to hold for most groups in this family. Furthermore, the smoothness condition is indeed very concrete, as our examples show. In section \ref{s 3}, we illustrate Corollary \ref{main thm 2} for two examples. 

\begin{itemize}
    \item First, we consider $G=S_3\hookrightarrow S_6$ via the regular representation.
    \item Second, we take $G=S_3\times D_8$ and consider the embedding that is the following composite  \[G\hookrightarrow S_3\times S_8\hookrightarrow  S_{24},\] with $m=3$ and $q=8$. The second factor of this embedding is via the regular representation of $D_8$. Then, we find that $B=D_8$ has $8$ elements. Thus, taking $k=4$, we find that $\binom{7}{3}=35> 24$. 
\end{itemize} 
These examples only serve to illustrate our results, which are far more general. We do not claim that they cannot be derived from known results. It is perhaps possible to effectively illustrate more elaborate examples illustrating Corollaries \ref{main thm 2} and \ref{cor B}, though this proves to be cumbersome. The Jacobian matrix for the second example itself is a $24\times 24$ polynomial matrix in $24$ variables. 
\subsection{Outlook} Our methods are motivated by the strategy taken in the above mentioned works of Ellenberg-Venkatesh \cite{ellenberg2006number}, Couveignes \cite{couveignes2020enumerating}, and Lemke-Oliver and Thorne \cite{lemke2022upper}. The methods introduced in this manuscript could potentially motivate future developments in the area of number field counting.

\subsection{Acknowledgment} When the project was started, the second named author's research was supported by the CRM-Simons postdoctoral fellowship.

\section{A conditional upper bound}
\par In this section, we establish a conditional asymptotic upper bound for $N_n(X;G)$. This result is based on a numerical criterion that involves the non-vanishing of the determinant of a Jacobian matrix. In the next section, this criterion is demonstrated through an example. 

\subsection{A general criterion}
\par Let $G$ be a transitive subgroup of $S_n$. We assume without loss of generality that there exists a number field $K$ with $[K:\Q]=n$, such that $\op{Gal}(\widetilde{K}/\Q)\simeq G$. Let $\alpha\in \cO_K$ be a primitive element, i.e., $K=\Q(\alpha)$. For $i\in [1, n]$, set $\alpha_i:=\sigma_i(\alpha)$. Recall that for $j\in [1, r]$, the image of $\sigma_j$ is contained in $K$. Observe therefore that $\alpha_i\in \cO_K$ for all $i\in [1, r]$. For $\mb{a}=(a_1, \dots, a_r)\in \Z_{\geq 0}^r$, we recall from Definition \ref{def of tr}, that
\[\op{Tr}_{\mb{a}}(x_1, \dots, x_n):=\sum_{i=1}^n \left(\prod_{j=1}^r x_{\pi_j(i)}^{a_j}\right).\]

We note that 
\[\op{Tr}_{\mb{a}}(\alpha_1, \dots, \alpha_n)=\sum_{i=1}^n \left(\prod_{j=1}^r \sigma_i \sigma_j(\alpha)^{a_j}\right)=\op{Tr}_{L/K}\left(\prod_{j=1}^r \sigma_j(\alpha)^{a_j}\right)\in \cO_K. \]Set $\lVert \alpha\rVert $ to denote the maximum of $|\sigma(\alpha)|$ as $\sigma$ ranges over all embeddings of $L$ into $\bar{\Q}$. Letting $x_\alpha:=(\alpha_1, \dots, \alpha_r)$, we find that 
\[\lVert \op{Tr}_{\mb{a}}(x_\alpha)\rVert\ll \lVert \alpha\rVert^{H(\mb{a})},\]
where $H(\mb{a}):= \sum_{i \in [1,r]}a_i$.
\begin{lemma}\label{lemma 2.2}
    For $N\geq 1$, consider polynomial functions \[f_1, \dots, f_N:\mathbf{A}^N(\mathbb{C})\rightarrow \mathbf{A}^1(\mathbb{C}).\] Assume that the determinant of $\left(\frac{\partial f_i}{\partial x_j}\right)_{1\leq i,j\leq N}$ is not identically zero. Then, there exists a non-zero polynomial $P(x_1, \dots, x_N)$ such that whenever $P(\mathbf{x}_0)\neq 0$, the variety 
    \[V_{\mathbf{x}_0}:=\left\{\mathbf{x}\in \mathbf{A}^N(\mathbb{C})\mid f_i(\mathbf{x})=f_i(\mathbf{x}_0)\text{ for all } i\in [1, N]\right\}\] consists of at most $\prod_i \deg f_i$ points. 
\end{lemma}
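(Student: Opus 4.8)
The plan is to read the $f_i$ as the coordinates of a single polynomial map $f=(f_1,\dots,f_N)\colon \mathbf{A}^N(\mathbb{C})\to \mathbf{A}^N(\mathbb{C})$ and to combine generic finiteness of $f$ with the projective B\'ezout inequality. First I would observe that the hypothesis that $\det\left(\partial f_i/\partial x_j\right)$ is not identically zero means precisely that the differential $df_{\mathbf{x}}$ is invertible on a dense Zariski-open subset of the source. Since $\mathbf{A}^N(\mathbb{C})$ is irreducible, its image closure $\overline{f(\mathbf{A}^N)}$ is irreducible, and the rank-$N$ differential forces $\dim \overline{f(\mathbf{A}^N)}=N$; hence $\overline{f(\mathbf{A}^N)}=\mathbf{A}^N$, i.e. $f$ is dominant.

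Next I would extract finiteness of fibers. Since source and target are irreducible of the same dimension $N$ and $f$ is dominant, the induced extension of function fields is finite (finitely generated and algebraic), so the generic fiber is zero-dimensional. By the standard spreading-out argument this yields a dense Zariski-open subset $U$ of the target over which every fiber of $f$ is finite. Put $Z:=\mathbf{A}^N\setminus U$, a proper closed subset of the target; then $f^{-1}(Z)$ is a proper closed subset of the source, the properness being exactly where dominance is used, since $f(\mathbf{A}^N)\cap U\neq\emptyset$ forces some $\mathbf{x}$ with $\mathbf{x}\notin f^{-1}(Z)$. Because $f^{-1}(Z)$ is a proper closed subset of $\mathbf{A}^N$, its ideal is nonzero, and I would choose any nonzero polynomial $P$ in that ideal. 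Then $P(\mathbf{x}_0)\neq 0$ implies $\mathbf{x}_0\notin f^{-1}(Z)$, so $f(\mathbf{x}_0)\in U$ and the fiber $V_{\mathbf{x}_0}=f^{-1}(f(\mathbf{x}_0))$ is finite.

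It then remains to bound the cardinality of a finite fiber. For such $\mathbf{x}_0$, writing $c_i:=f_i(\mathbf{x}_0)$, the set $V_{\mathbf{x}_0}$ is the zero-dimensional intersection of the $N$ hypersurfaces $\{f_i-c_i=0\}$ in $\mathbf{A}^N(\mathbb{C})$, and $\deg(f_i-c_i)=\deg f_i$ since a constant shift does not change the top-degree part. Homogenizing and invoking the B\'ezout inequality, which bounds the number of points in a finite intersection of $N$ hypersurfaces in projective $N$-space by the product of their degrees, I obtain $\#V_{\mathbf{x}_0}\leq \prod_i \deg f_i$, as the affine points form a subset of the projective intersection. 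The step I expect to be the main obstacle is the passage from the nonvanishing Jacobian to genuine finiteness of the \emph{entire} fiber $V_{\mathbf{x}_0}$, rather than mere local finiteness at $\mathbf{x}_0$, packaged into a single polynomial $P$; this is precisely where dominance and generic finiteness must be combined, and once finiteness is secured the B\'ezout bound is routine.
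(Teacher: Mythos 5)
Your argument is essentially correct, but note first how it relates to the paper: the paper does not prove this lemma internally at all --- its ``proof'' is a one-line citation of Lemma~2.1 of Lemke Oliver--Thorne, from which the statement is taken verbatim. What you have done is reconstruct the standard proof of that cited result, and your structure is sound: nonvanishing of the Jacobian determinant gives generic rank $N$ for the map $f=(f_1,\dots,f_N)$, hence dominance; equality of dimensions makes the function field extension finite, so $f$ is generically finite, and spreading out produces a dense open $U$ in the target over which all fibers are finite; since $f$ is dominant, $f^{-1}(\mathbf{A}^N\setminus U)$ is a \emph{proper} closed subset of the source, so its ideal contains a nonzero polynomial $P$, and $P(\mathbf{x}_0)\neq 0$ then forces $V_{\mathbf{x}_0}=f^{-1}(f(\mathbf{x}_0))$ to be finite. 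You correctly identify dominance as the point where properness of $f^{-1}(\mathbf{A}^N\setminus U)$ is secured.

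The one step stated too loosely is the B\'ezout bound. After homogenizing $f_i-c_i$, the intersection of the $N$ hypersurfaces in $\mathbb{P}^N$ need \emph{not} be finite even when the affine fiber $V_{\mathbf{x}_0}$ is: there can be positive-dimensional components at infinity. So the version of B\'ezout you invoke (``the number of points in a finite intersection of $N$ hypersurfaces in projective $N$-space'') does not literally apply, because its hypothesis may fail. The correct tool is the refined B\'ezout inequality, which bounds the number of \emph{isolated} points of an intersection of $N$ hypersurfaces in $\mathbb{P}^N$ by the product of their degrees (e.g.\ Fulton, \emph{Intersection Theory}, Example~12.3.1): each point of the finite affine fiber is an isolated point of the projective intersection, and the bound $\#V_{\mathbf{x}_0}\leq \prod_i\deg f_i$ follows. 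With that substitution your proof is complete, and it supplies exactly the argument the paper outsources to the reference.
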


\begin{proof}
The above result is \cite[Lemma 2.1]{lemke2022upper}.    
\end{proof}

\begin{proposition}\label{main prop}
    Let $G$ be a transitive permutation group contained in $S_n$ and set $H:=G\cap S_{n-1}$. We set $N$ to be the normalizer of $H$, $r:=|N/H|$ and for $j\in [1, r]$, let $\pi_j\in S_n$ be the associated permutation (cf. Definition \ref{def of pi}). Let $\mb{a}_1, \dots, \mb{a}_n \in \mathbb{Z}_{\geq 0}^r$ be a set of vectors and set $f_i:=\op{Tr}_{\mb{a}_i}$ for $i\in [1, n]$. Assume that the determinant of the Jacobian-matrix \[\mathbb{D}=\mathbb{D}(f_1, \dots, f_n):=\left(\frac{\partial f_i}{\partial x_j}\right)_{1\leq i, j\leq n}\] is not identically $0$. Then we have the asymptotic bound
    \[N_{n}(X;G)\ll X^{\frac{1}{n}\left(\sum_{i=1}^n H(\mb{a}_i)\right)},\] where the implied constant depends only on the vectors $\{\mb{a}_i\mid i\in [1, n]\}$.
\end{proposition}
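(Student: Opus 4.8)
The plan is to translate the problem of counting fields into a counting of integer points lying in a box, and then to bound the fibres of the associated evaluation map using Lemma \ref{lemma 2.2}. First I would attach to each field $K$ counted by $N_n(X;G)$ a primitive generator of bounded height. By the geometry of numbers applied to the lattice $\cO_K$, whose covolume is $\asymp\sqrt{|\Delta_K|}$, together with the standard argument that rules out elements generating $\Q$ or a proper subfield, every $K$ with $|\Delta_K|\le X$ admits $\alpha\in\cO_K$ with $K=\Q(\alpha)$ and $\lVert\alpha\rVert\ll|\Delta_K|^{1/(2(n-1))}\le X^{1/n}$; this is the only place where the discriminant bound enters, and the crude exponent $1/n$ is all that is needed. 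Writing $\alpha_i=\sigma_i(\alpha)$ and $x_\alpha=(\alpha_1,\dots,\alpha_n)$, the computation displayed just before the statement shows that each $f_i(x_\alpha)=\op{Tr}_{\mb{a}_i}(x_\alpha)$ is a rational integer satisfying $|f_i(x_\alpha)|\ll\lVert\alpha\rVert^{H(\mb{a}_i)}\ll X^{H(\mb{a}_i)/n}$.

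Next I would study the evaluation map $\Psi\colon\alpha\mapsto\bigl(f_1(x_\alpha),\dots,f_n(x_\alpha)\bigr)\in\Z^n$. Because the bound $|f_i(x_\alpha)|\ll X^{H(\mb{a}_i)/n}$ is uniform over all fields $K$, the image of $\Psi$ lies in a single box containing $\ll\prod_{i=1}^n X^{H(\mb{a}_i)/n}=X^{\frac1n\sum_i H(\mb{a}_i)}$ integer points. On the other side, the hypothesis $\det\mathbb{D}\not\equiv0$ is exactly what allows me to invoke Lemma \ref{lemma 2.2} with $N=n$: there is a fixed nonzero polynomial $P$, depending only on the $f_i$, such that whenever $P(x_\alpha)\ne0$ the fibre $\Psi^{-1}(\Psi(\alpha))\subseteq\mathbf{A}^n(\mathbb{C})$ has at most $\prod_i\deg f_i=\prod_i H(\mb{a}_i)$ points. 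Since each conjugate-vector $x_\alpha$ recovers $\alpha=\alpha_1$ and hence $K=\Q(\alpha_1)$, every point of $\Z^n$ is hit by the good generators (those with $P(x_\alpha)\ne0$) of at most $\prod_i H(\mb{a}_i)$ distinct fields. Consequently the number of fields possessing at least one such good generator is $\ll X^{\frac1n\sum_i H(\mb{a}_i)}$, with implied constant depending only on the chosen vectors, as claimed.

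The hard part will be to guarantee that \emph{every} field contributes a good generator, that is, to dispose of the degenerate locus $\{P(x_\alpha)=0\}$ while retaining both primitivity and the height bound. Since the conjugate embedding $\cO_K\hookrightarrow\mathbb{C}^n$ has Zariski-dense image, $P$ cannot vanish identically on it, so good generators certainly exist in $K$; the issue is producing one of height $\ll X^{1/n}$. I would attempt this by exploiting the translation freedom $\alpha\mapsto\alpha+c$ with $c\in\Z$, which moves $x_\alpha$ to $x_\alpha+c\mathbf{1}$, preserves integrality of the $f_i$, keeps $\lVert\alpha\rVert\ll X^{1/n}$ for $|c|\ll X^{1/n}$, and meets $\{P=0\}$ for at most $\deg P$ values of $c$ unless $P$ is constant along the line $x_\alpha+c\mathbf{1}$. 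Combined with the abundance of primitive generators coming from the geometry of numbers (the box of radius $X^{1/n}$ already contains $\gg X/\sqrt{|\Delta_K|}\gg\sqrt{X}$ lattice points), this selects a good generator for all but a thin family of fields. The genuinely delicate case is when $P$ is invariant under the shift by $\mathbf{1}$ and vanishes along every such line through a bounded generator of $K$; those fields lie on a fixed proper subvariety and must be controlled separately, either by iterating the argument in lower dimension or by absorbing them as a lower-order contribution. This exceptional-locus analysis is the one step that resists a purely formal treatment and is where the main effort of the proof will be concentrated.
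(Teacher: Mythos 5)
Your argument is, in outline, the paper's own proof of Proposition \ref{main prop}: the same choice of a primitive generator of height $\ll X^{1/n}$, the same application of Lemma \ref{lemma 2.2} with $N=n$ to bound the fibres by $\prod_i \deg f_i=\prod_i H(\mb{a}_i)$, and the same count of the integer vectors $\left(\op{Tr}_{\mb{a}_1}(x_\alpha),\dots,\op{Tr}_{\mb{a}_n}(x_\alpha)\right)$ inside a box containing $\ll \prod_i X^{H(\mb{a}_i)/n}=X^{\frac{1}{n}\sum_i H(\mb{a}_i)}$ lattice points. (Incidentally, your convention $x_\alpha=(\alpha_1,\dots,\alpha_n)$ silently corrects an inconsistency in the paper, which writes $x_\alpha\in\cO_K^r$ even though the $f_i$ are polynomials in $n$ variables.)

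The only divergence is your final paragraph, and there your diagnosis is half right. The paper does not prove the simultaneous conditions $P(x_\alpha)\neq 0$, $K=\Q(\alpha)$, $\lVert\alpha\rVert\ll X^{1/n}$ either: it dispatches them as ``a standard argument,'' citing the proof of Theorem 1.2 of \cite{lemke2022upper}. You are correct that translations $\alpha\mapsto\alpha+c$ alone cannot close this step, since $P(x_\alpha+c\mathbf{1})$ may vanish identically in $c$ for the particular $\alpha$ at hand, and adjoining dilations or other low-dimensional families merely replaces the bad locus by another fixed proper subvariety containing the base point. But the ``exceptional-locus analysis over a thin family of fields'' you anticipate is not how the standard argument proceeds, and is not needed: since $P$ is a single polynomial depending only on $f_1,\dots,f_n$ and not on $K$, while the image of $\cO_K$ under $\beta\mapsto x_\beta$ is Zariski dense in $\mathbf{A}^n(\mathbb{C})$ (it spans a $\Q$-form of $\mathbb{C}^n$), one works field by field with the \emph{full} supply of short elements of $\cO_K$ and bounds directly the number of those lying on the fixed hypersurface $\{P=0\}$ or generating a proper subfield, showing they cannot exhaust the short vectors; this lattice-point count is carried out in \cite{lemke2022upper}, and it is also what substantiates your heuristic ``$\gg\sqrt{X}$ lattice points in the box,'' which as stated needs care because the successive minima of $\cO_K$ can exceed $X^{1/n}$. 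So to complete your write-up you should either cite \cite{lemke2022upper} for the generator selection, as the paper does, or reproduce that count; no residual family of fields has to be controlled separately.
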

\begin{proof}
    For $i\in [1, n]$, set $f_i:=\op{Tr}_{\mb{a}_i}$ and $z:=\prod_i \op{deg}f_i$. Since the determinant of $\mathbb{D}$ is not identically zero, it follows from Lemma \ref{lemma 2.2} that there exists a non-zero polynomial $P(x_1, \dots, x_n)$ such that whenever $P(\mathbf{x}_0)\neq 0$, the variety 
    \[V_{\mathbf{x}_0}:=\left\{\mathbf{x}\in \mathbf{A}^n(\mathbb{C})\mid f_i(\mathbf{x})=f_i(\mathbf{x}_0)\text{ for all } i\in [1, n]\right\}\] consists of at most $z$ points. Let $K$ be a number field with $[K:\Q]=n$ and $|\Delta_K|\leq X$ and assume that $\op{Gal}(\widetilde{K}/\Q)\simeq G$ as permutation subgroups of $S_n$. Recall that $r=|\op{Gal}(K/K_0)|$ and $\sigma_1, \dots, \sigma_r$ are embeddings with image in $K$. Let $x_\alpha$ denote the point $(\sigma_1(\alpha), \sigma_2(\alpha), \dots, \sigma_r(\alpha))\in \cO_K^r$. Then it follows from a standard argument (cf. the proof of \cite[Theorem 1.2]{lemke2022upper}) that $\alpha\in \cO_K$ can be chosen such that 
    \begin{enumerate}
        \item $P(x_\alpha)\neq 0$,
        \item $K=\Q(\alpha)$,
        \item $\lVert \alpha\rVert \ll X^{\frac{1}{n}}$.
    \end{enumerate}
    Then there are at most $z$ values $x\in \mathbb{C}^r$ such that $\op{Tr}_{\mb{a}_i}(x)=\op{Tr}_{\mb{a}_i}(x_\alpha)$ for all $i=1, \dots, n$. In particular, the number field $K$ is determined up to $z$ choices by the vector
    \[\left(\op{Tr}_{\mb{a}_1}(x_\alpha), \op{Tr}_{\mb{a}_2}(x_\alpha), \dots, \op{Tr}_{\mb{a}_i}(x_\alpha), \dots, \op{Tr}_{\mb{a}_n}(x_\alpha)\right)\in \Z^n.\] On the other hand, $|\op{Tr}_{\mb{a}_i}(x_\alpha)|\ll X^{\frac{H(\mb{a}_i)}{n}}$ for all $i\in [1, n]$, and thus, the total number of such vectors is at most $\left(\prod_i (2X)^{\frac{H(\mb{a}_i)}{n}}\right)$. We deduce that $N_n(X;G)\ll \prod_i X^{\frac{H(\mb{a}_i)}{n}}$, where the implied constant depends only on the vectors $\mb{a}_i$. 
\end{proof}
With respect to notation from Proposition \ref{main prop}, assume that $r\geq 3$. Fix integers $k$ and $t$ such that $k\in [2, r)$ and $t\geq 2$. Let $\mathcal{S}_k(r)$ be the  all $k$-element subsets of $[r]$ that contain $1$. Given $\mathcal{B}\in \mathcal{S}_k(r)$, let $\mb{a}(\mathcal{B})=(a_1, \dots, a_r)$ be the vector which is defined so that 
\[a_i:=\begin{cases}
    t\text{ if }i=1;\\
    1\text{ if }i\neq 1\text{ and }i \in \mathcal{B};\\
    0\text{ otherwise. }
\end{cases}\]
\begin{definition}\label{def of ai}Set $l:=\# \mathcal{S}_k(r)=\binom{r-1}{k-1}$ and assume that $l\geq n$. Write $\mathcal{B}_1, \dots, \mathcal{B}_l$ to be an enumeration of the subsets in $\mathcal{S}_k(r)$ and set $\mb{a}_i:=\mb{a}(\mathcal{B}_i)$. 
\end{definition}

\begin{proposition}
    With respect to notation above, the trace functions $f_i:=\op{Tr}_{\mb{a}_i}$ are linearly independent over $\mathbb{C}$.
\end{proposition}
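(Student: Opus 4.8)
The plan is to show linear independence by exhibiting, for each $f_i$, a monomial in the $x$-variables that appears in $f_i$ but in none of the other $f_j$. If I can do this, then any nontrivial linear relation $\sum_i c_i f_i = 0$ forces each $c_i = 0$ by comparing coefficients of these distinguishing monomials, which gives the result immediately.

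First I would unwind the definition of $f_i = \op{Tr}_{\mb{a}_i}$. By Definition \ref{def of tr}, we have
\[
f_i(x_1, \dots, x_n) = \sum_{s=1}^n \prod_{j=1}^r x_{\pi_j(s)}^{(a_i)_j},
\]
where $(a_i)_j$ is the $j$-th coordinate of $\mb{a}_i = \mb{a}(\mathcal{B}_i)$. Recall from Definition \ref{def of ai} that $\mb{a}(\mathcal{B}_i)$ has a $t$ in coordinate $1$, a $1$ in each coordinate $j \in \mathcal{B}_i \setminus \{1\}$, and $0$ elsewhere. Since $\pi_1 = \op{Id}$, the summand indexed by $s$ therefore contributes the monomial
\[
x_s^{\,t} \prod_{j \in \mathcal{B}_i \setminus \{1\}} x_{\pi_j(s)},
\]
and $f_i$ is the sum of these over $s \in [1,n]$. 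The key structural point I would isolate is the summand $s = 1$: because $\pi_1(1) = 1$ and, for $j \neq 1$, $\pi_j(1) \neq 1$ (as noted right after Definition \ref{def of pi}), this summand is $x_1^{\,t} \prod_{j \in \mathcal{B}_i \setminus\{1\}} x_{\pi_j(1)}$, a monomial whose exponent vector records precisely the set $\{\pi_j(1) : j \in \mathcal{B}_i \setminus \{1\}\}$ together with the power $t$ on $x_1$.

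The heart of the argument is to show that the monomial coming from the $s=1$ term distinguishes $f_i$ from the others, or more robustly, to produce a linear functional on polynomials that separates the $f_i$. The cleanest route I foresee is to focus on monomials of the shape $x_1^{t} \cdot (\text{squarefree product of } k-1 \text{ distinct variables other than } x_1)$: these arise only from the $s=1$ term of each $f_i$ (for $s \geq 2$ the factor $x_s^t$ with $t \geq 2$ already forces a variable other than $x_1$ to carry exponent $\geq 2$, so such summands cannot be squarefree-away-from-$x_1$ with the correct shape). Then the monomial attached to $f_i$ is determined by the subset $\{\pi_j(1) : j \in \mathcal{B}_i\} \subseteq [n]$, and since the map $j \mapsto \pi_j(1)$ is injective on $[1,r]$ (distinct cosets $g_j H$ give distinct $g_j g_1 H = g_j H$, hence distinct values $\pi_j(1)$), distinct subsets $\mathcal{B}_i$ yield distinct monomials. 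Thus the $s=1$ monomials of $f_1, \dots, f_l$ are pairwise distinct, and a relation $\sum_i c_i f_i = 0$ read off on each such monomial gives $c_i = 0$.

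The main obstacle I anticipate is the possibility that a distinguishing monomial of $f_i$ (from its $s=1$ term) accidentally coincides with a non-$s=1$ monomial of some $f_{i'}$, which would spoil the coefficient comparison. To handle this I would verify carefully that the power $t \geq 2$ on $x_1$ cannot be matched by any $s \geq 2$ summand of any $f_{i'}$ unless $\pi_j(s) = 1$ for exactly one $j$ with $a$-coordinate equal to $t$; but the only coordinate equal to $t$ is $j=1$, and $\pi_1(s) = s \neq 1$ for $s \neq 1$, so no $s \geq 2$ summand can place exponent $t$ on $x_1$. This confirms that the $s=1$ monomials are genuinely unique to their respective $f_i$ across the entire collection, completing the separation and hence the proof of linear independence.
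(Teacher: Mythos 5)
Your proof is correct and takes essentially the same approach as the paper: both isolate the $s=1$ summand $x_1^{t}\prod_{j\in\mathcal{B}_i\setminus\{1\}}x_{\pi_j(1)}$ as a distinguishing monomial of $f_i$ and rule out its occurrence in any other summand because the exponent $t\geq 2$ lands on $x_{\pi_1(s)}=x_s\neq x_1$ whenever $s\geq 2$, so comparing coefficients kills any linear relation. The only cosmetic difference is that you run the argument uniformly over every index $i$, while the paper argues for $f_1$ against the rest and leaves the symmetric repetition implicit.
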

\begin{proof}
    We write $\mb{a}_1=(b_1, \dots, b_r)$, with $b_1=2$ and $b_j\in \{0,1\}$ for $j\in [2, r]$. Note that the monomial $g:=x_1^{b_1}x_2^{b_2}\dots x_r^{b_r}$ is the support of $f_1$. It suffices to show that this monomial is not in the support of any of the polynomials $f_i$ for $i\in [2, l]$. Write $\mb{a}_i:=(c_1, \dots, c_r)$, once again with $c_1=2$ and $c_i\in \{0,1\}$ for $i\in [2, r]$. Then, any monomial in the support of $f_i$ is of the form $h:=x_{\pi_1(j)}^{c_1}x_{\pi_2(j)}^{c_2}\dots x_{\pi_r(j)}^{c_r}$ for some $j\in [1, n]$. Note that $\pi_1(j)=j$, and $c_1=2$. Therefore, in orderfor $g=h$, it must be the case that $j=1$. This implies that $\mb{a}_i=\mb{a}_1$, which is a contradiction. Therefore, none of the monomials in the support of $f_i$ coincide with $g$. This implies that the functions $f_1, \dots, f_l$ are linearly independent over $\mathbb{C}$.
\end{proof}
We end the section with the proofs of Theorem \ref{main thm} and Corollary \ref{main thm 2}.

\begin{proof}[Proof of Theorem \ref{main thm}]
    We find that $H(\mb{a}_i)=k+t-1$ for all $i\in [1, l]$, and thus, the result is a direct consequence of Proposition \ref{main prop}.
\end{proof}

\begin{proof}[Proof of Corollary \ref{main thm 2}]
    The result follows directly from Theorem \ref{main thm}.
\end{proof}

\section{Computations verifying the Jacobian condition}\label{s 3}

\subsection{Example 1: $S_3\subset S_6$}
\par We take $G=S_3$ sitting inside $S_6$ via the regular representation. The permutations of $3$ elements are $g_1=\mathbf{1}$, $g_2=(12)$, $g_3=(23)$, $g_4=(13)$, $g_5=(123)$ and $g_6=(132)$. In this case, $H=1$ and $N=G$, we find that $r=6$. Given an $S_3$-extension, $K/\Q$, we identify $g_i$ with an embedding $\sigma_i:K\hookrightarrow \bar{\Q}$. From the multiplication table for $S_3$, one is able to determine that 
\[\begin{split}
   & \pi_1=\begin{pmatrix}
    1 & 2 & 3 & 4 & 5 & 6 \\
    1 & 2 & 3 & 4 & 5 & 6
  \end{pmatrix}, 
   \pi_2=\begin{pmatrix}
    1 & 2 & 3 & 4 & 5 & 6 \\
    2 & 1 & 6 & 5 & 4 & 3
  \end{pmatrix}, 
   \pi_3=\begin{pmatrix}
    1 & 2 & 3 & 4 & 5 & 6 \\
    3 & 5 & 1 & 6 & 2 & 4
  \end{pmatrix}, \\
  &     \pi_4=\begin{pmatrix}
    1 & 2 & 3 & 4 & 5 & 6 \\
    4 & 6 & 5 & 1 & 3 & 2
  \end{pmatrix}, 
   \pi_5=\begin{pmatrix}
    1 & 2 & 3 & 4 & 5 & 6 \\
    5 & 3 & 4 & 2 & 6 & 1
  \end{pmatrix}, 
   \pi_6=\begin{pmatrix}
    1 & 2 & 3 & 4 & 5 & 6 \\
    6 & 4 & 2 & 3 & 1 & 5
  \end{pmatrix}.
\end{split}\]
We take $t=2$, $k=3$ and note that $l=\binom{r-1}{k-1}=\binom{5}{2}=10>6$. Let us list the set $\{\mb{a}_i\mid i\in [1, 10]\}$. We find that 
\[\begin{split}
    & \mb{a}_1=(2,1, 1, 0, 0, 0), \mb{a}_2=(2,1, 0, 1, 0, 0), \mb{a}_3=(2,1, 0, 0, 1, 0), \\
    & \mb{a}_4=(2,1, 0, 0, 0, 1),\mb{a}_5=(2,0, 1, 1, 0, 0),\mb{a}_6=(2,0, 1, 0, 1, 0),\\
    & \mb{a}_7=(2,0, 1, 0, 0, 1),\mb{a}_8=(2,0, 0, 1, 1, 0),\mb{a}_9=(2,0, 0, 1, 0, 1),\\
    & \mb{a}_{10}=(2,0,0,0,1,1).
\end{split}\]
Let $\mb{a}=(a_1, \dots, a_6)$, we have that 
\[\begin{split}\op{Tr}_{\mb{a}}=& x_1^{a_1}x_2^{a_2}x_3^{a_3}x_4^{a_4}x_5^{a_5}x_6^{a_6}+x_2^{a_1}x_1^{a_2}x_5^{a_3}x_6^{a_4}x_3^{a_5}x_4^{a_6}+x_3^{a_1}x_6^{a_2}x_1^{a_3}x_5^{a_4}x_4^{a_5}x_2^{a_6} \\
+ & x_4^{a_1}x_5^{a_2}x_6^{a_3}x_1^{a_4}x_2^{a_5}x_3^{a_6}+x_5^{a_1}x_4^{a_2}x_2^{a_3}x_3^{a_4}x_6^{a_5}x_1^{a_6}+x_6^{a_1}x_3^{a_2}x_4^{a_3}x_2^{a_4}x_1^{a_5}x_5^{a_6}.\end{split}\]

Setting $f_i:=\op{Tr}_{\mb{a}_i}$, we compute the jacobian of $(f_1, \dots, f_6)$ in the variables $(x_1, \dots, x_6)$, and find that its determinant is not identically $0$. This computation was performed on the {\tt SageMathCloud}, the code is provided below
\begin{verbatim}
var('x1,x2,x3,x4,x5,x6')
f1 = x1^2*x2*x3+x2^2*x1*x6+x3^2*x5*x1 +x4^2*x6*x5+x5^2*x3*x4+x6^2*x4*x2
f2 = x1^2*x2*x4+x2^2*x1*x6+x3^2*x6*x5 +x4^2*x5*x1+x5^2*x4*x3+x6^2*x3*x4
f3 = x1^2*x2*x5+x2^2*x1*x3+x3^2*x6*x4 +x4^2*x5*x2+x5^2*x4*x6+x6^2*x3*x1
f4 = x1^2*x2*x6+x2^2*x1*x4+x3^2*x6*x2 +x4^2*x5*x3+x5^2*x4*x1+x6^2*x3*x5
f5=x1^2*x3*x4+x2^2*x5*x6+x3^2*x1*x5 +x4^2*x6*x1+x5^2*x2*x3+x6^2*x4*x3
f6=x1^2*x3*x5+x2^2*x5*x3+x3^2*x1*x4 +x4^2*x6*x2+x5^2*x2*x6+x6^2*x4*x1
b=jacobian( [f1,f2,f3,f4, f5,f6], [x1,x2,x3,x4,x5,x6])
a=det(b)
print(a)
\end{verbatim}
The conditions of Theorem \ref{main thm} are satisfied in this case.

\subsection{Example 2:} We set $G:=S_3\times D_8\subset S_{24}$, where $D_8\subset S_8$ via the regular representation. We note that $D_8$ is a subgroup of $S_4$. It consists of the rotations of the square $1, a, a^2, a^3$, and the reflections $b,ab,a^2b,a^3b$. We order these elements $g_1, \dots, g_4$ and $g_5, \dots, g_8$ respectively.
\par We order the set $\{(i,j)\mid i\in [1, 3], j\in [1, 8]\}$ in lexicographic order, with $D_4$ acting on the second components. Thus, $x_1,x_2, \dots, x_{24}=x_{(1, 1)}, \dots, x_{(3, 8)}$. For ease of notation, we set 
\[\begin{split}
    & u_1, \dots, u_8=x_1, \dots, x_8;\\
    & v_1, \dots, v_8= x_9, \dots, x_{16};\\
    & w_1, \dots, w_8=x_{17}, \dots, x_{24}.
\end{split}\]
By inspecting the multiplication tables for $D_8$, we find that 
\[\begin{split}\op{Tr}_{\mb{a}}=& u_1^{a_1}u_2^{a_2}u_3^{a_3}u_4^{a_4}u_5^{a_5}u_6^{a_6}u_7^{a_7}u_8^{a_8}+u_2^{a_1}u_3^{a_2}u_4^{a_3}u_1^{a_4}u_6^{a_5}u_7^{a_6}u_8^{a_7}u_5^{a_8} \\ 
+& u_3^{a_1}u_4^{a_2}u_1^{a_3}u_2^{a_4}u_7^{a_5}u_8^{a_6}u_5^{a_7}u_6^{a_8}+u_4^{a_1}u_1^{a_2}u_2^{a_3}u_3^{a_4}u_8^{a_5}u_5^{a_6}u_6^{a_7}u_7^{a_8}\\
+& u_5^{a_1}u_8^{a_2}u_7^{a_3}u_6^{a_4}u_1^{a_5}u_4^{a_6}u_3^{a_7}u_2^{a_8}+u_6^{a_1}u_5^{a_2}u_8^{a_3}u_7^{a_4}u_2^{a_5}u_1^{a_6}u_4^{a_7}u_3^{a_8}\\
+ & u_7^{a_1}u_6^{a_2}u_5^{a_3}u_8^{a_4}u_3^{a_5}u_2^{a_6}u_1^{a_7}u_4^{a_8}+u_8^{a_1}u_7^{a_2}u_6^{a_3}u_5^{a_4}u_4^{a_5}u_3^{a_6}u_2^{a_7}u_1^{a_8} \\
+ & v_1^{a_1}v_2^{a_2}v_3^{a_3}v_4^{a_4}v_5^{a_5}v_6^{a_6}v_7^{a_7}v_8^{a_8}+v_2^{a_1}v_3^{a_2}v_4^{a_3}v_1^{a_4}v_6^{a_5}v_7^{a_6}v_8^{a_7}v_5^{a_8} \\ 
+& v_3^{a_1}v_4^{a_2}v_1^{a_3}v_2^{a_4}v_7^{a_5}v_8^{a_6}v_5^{a_7}v_6^{a_8}+v_4^{a_1}v_1^{a_2}v_2^{a_3}v_3^{a_4}v_8^{a_5}v_5^{a_6}v_6^{a_7}v_7^{a_8}\\
+& v_5^{a_1}v_8^{a_2}v_7^{a_3}v_6^{a_4}v_1^{a_5}v_4^{a_6}v_3^{a_7}v_2^{a_8}+v_6^{a_1}v_5^{a_2}v_8^{a_3}v_7^{a_4}v_2^{a_5}v_1^{a_6}v_4^{a_7}v_3^{a_8}\\
+ & v_7^{a_1}v_6^{a_2}v_5^{a_3}v_8^{a_4}v_3^{a_5}v_2^{a_6}v_1^{a_7}v_4^{a_8}+v_8^{a_1}v_7^{a_2}v_6^{a_3}v_5^{a_4}v_4^{a_5}v_3^{a_6}v_2^{a_7}v_1^{a_8} \\ 
+& w_1^{a_1}w_2^{a_2}w_3^{a_3}w_4^{a_4}w_5^{a_5}w_6^{a_6}w_7^{a_7}w_8^{a_8}+w_2^{a_1}w_3^{a_2}w_4^{a_3}w_1^{a_4}w_6^{a_5}w_7^{a_6}w_8^{a_7}w_5^{a_8} \\ 
+& w_3^{a_1}w_4^{a_2}w_1^{a_3}w_2^{a_4}w_7^{a_5}w_8^{a_6}w_5^{a_7}w_6^{a_8}+w_4^{a_1}w_1^{a_2}w_2^{a_3}w_3^{a_4}w_8^{a_5}w_5^{a_6}w_6^{a_7}w_7^{a_8}\\
+& w_5^{a_1}w_8^{a_2}w_7^{a_3}w_6^{a_4}w_1^{a_5}w_4^{a_6}w_3^{a_7}w_2^{a_8}+w_6^{a_1}w_5^{a_2}w_8^{a_3}w_7^{a_4}w_2^{a_5}w_1^{a_6}w_4^{a_7}w_3^{a_8}\\
+ & w_7^{a_1}w_6^{a_2}w_5^{a_3}w_8^{a_4}w_3^{a_5}w_2^{a_6}w_1^{a_7}w_4^{a_8}+w_8^{a_1}w_7^{a_2}w_6^{a_3}w_5^{a_4}w_4^{a_5}w_3^{a_6}w_2^{a_7}w_1^{a_8}.\end{split}\]

We take $k=4$ and thus find that $l=\binom{7}{3}=35$. We choose a set of $24$ vectors $\mb{a}_1, \dots, \mb{a}_{24}$ and list them below
\[\begin{split}
    & \mb{a}_1=(2,1,1,1,0,0,0,0), \mb{a}_2=(2,1,1,0,1,0,0,0), \mb{a}_3=(2,1,1,0,0,1,0,0),\\
    & \mb{a}_4=(2,1,1,0,0,0,1,0), \mb{a}_5=(2,1,1,0,0,0,0,1), \mb{a}_6=(2,1,0,1,1,0,0,0),\\
     & \mb{a}_7=(2,1,0,1,0,1,0,0), \mb{a}_8=(2,1,0,1,0,0,1,0), \mb{a}_9=(2,1,0,1,0,0,0,1),\\
      & \mb{a}_{10}=(2,1,0,0,1,1,0,0), \mb{a}_{11}=(2,1,0,0,1,0,1,0), \mb{a}_{12}=(2,1,0,0,1,0,0,1),\\
      & \mb{a}_{13}=(2,1,0,0,0,1,1,0), \mb{a}_{14}=(2,1,0,0,0,1,0,1), \mb{a}_{15}=(2,1,0,0,0,0,1,1),\\
      & \mb{a}_{16}=(2,0,1,1,1,0,0,0), \mb{a}_{17}=(2,0,1,1,0,1,0,0), \mb{a}_{18}=(2,0,1,1,0,0,1,0),\\
       & \mb{a}_{19}=(2,0,1,1,0,0,0,1), \mb{a}_{20}=(2,0,1,0,1,1,0,0), \mb{a}_{21}=(2,0,1,0,1,0,1,0),\\
       & \mb{a}_{22}=(2,0,1,0,1,0,0,1), \mb{a}_{23}=(2,0,1,0,0,1,1,0), \mb{a}_{24}=(2,0,1,0,0,1,0,1).
\end{split}\]
Setting $f_i:=\op{Tr}_{\mb{a}_i}$, consider the Jacobian matrix of $(f_1, \dots, f_{24})$ in the variables \[(u_1, \dots, u_8, v_1, \dots, v_8, w_1, \dots, w_8).\] If the associated Jabobian matrix is shown to be nonsingular, then the Corollary \ref{main thm 2} implies that $N_{24}(X;S_3\times D_8)\ll X^5$. Compare this with Schmidt's upper bound, which implies that $N_{24}(X; S_3\times D_8)\ll X^{6.5}$.
\subsection*{Data availability statement} No data was generated or analyzed in establishing our results.
\bibliographystyle{alpha}
\bibliography{references}
\end{document}